\newcommand{\eChar}{\begin{enumerate}[(i)]}
\newcommand{\eCharR}{\begin{enumerate}[(a)]}
\newcommand{\eBr}{\begin{enumerate}[(1)]}
\newcommand{\Abstract}
\title
{
Bounding the diameter and eigenvalues of amply regular graphs via Lin--Lu--Yau curvature
}
\author{Xueping Huang}
\address{
School of Mathematics and Statistics\\
Nanjing University of Information Science \& Technology\\
219 Ningliu Road\\
Nanjing 210044\\
Jiangsu Province\\
China}
\email{hxp@nuist.edu.cn}
\author{Shiping Liu}
\address{
School of Mathematical Sciences\\
University of Science and Technology of China\\
96 Jinzhai Road\\
Hefei 230026\\
Anhui Province\\
China}
\email{spliu@ustc.edu.cn}
\author{Qing Xia}
\email{xiaqing420@163.com}
\date{\today}
\theoremstyle{plain}
\newtheorem{lemma}{Lemma}[section]
\newtheorem{theorem}[lemma]{Theorem}
\newtheorem{proposition}[lemma]{Proposition}
\newtheorem{corollary}[lemma]{Corollary}
\theoremstyle{definition}
\newtheorem{definition}[lemma]{Definition}
\newtheorem{remark}[lemma]{Remark}
\numberwithin{equation}{section}
\begin{document}

\pagestyle{plain}

\begin{abstract}
An amply regular graph is a regular graph such that any two adjacent vertices have $\alpha$ common neighbors and any two vertices with distance $2$ have $\beta$ common neighbors.
We prove
a sharp lower bound estimate for the Lin--Lu--Yau curvature of any amply regular graph with girth $3$ and $\beta>\alpha$. The proof involves new ideas relating discrete Ricci curvature with local matching properties: This includes a novel construction of a regular bipartite graph from the local structure and related distance estimates. As a consequence, we obtain sharp diameter and eigenvalue bounds for amply regular graphs.
\end{abstract}
\keywords{Amply regular graphs, perfect matching, Wasserstein distance, Lin--Lu--Yau curvature, transport plan}
\maketitle

\section{Introduction and statement of results}

Bounding the diameter \cite{Terwilliger82,Terwilliger83,Ivanov83,Terwilliger85,Pyber99,NP22,NP22+} and eigenvalues \cite{Terwilliger86,BKP15,Button6,Kivva21} of a distance-regular graph in terms of its intersection numbers is a very important problem, see also \cite[Chapters 4 and 5]{BCN89} and the dynamic survey \cite{DKT16}. In particular, upper bounds for the diameter including some linear or nonlinear combination of the first two elements $\{d, b_1; 1, c_2\}$ of the intersection array have been asked, e.g., in \cite{NP22}, where $d$ stands for the vertex degree. Note that $c_2$ is the number of common neighbors of two vertices with distance $2$ and $d-b_1-1$ gives the number of common neighbors of two vertices with distance $1$. In this paper we consider this problem for a more general class of graphs called\emph{ amply regular graphs} \cite[Section 1.1]{BCN89}.
\begin{definition} [Amply regular graph \cite{BCN89}]\label{definition:amply}
We call a $d$-regular graph on $n$ vertices an amply regular graph with parameters $(n,d,\alpha,\beta)$ if it holds that:
\begin{itemize}
  \item [(i)] Any two adjacent vertices have $\alpha$ common neighbors;
  \item [(ii)] Any two vertices with distance $2$ have $\beta$ common neighbors.
\end{itemize}
\end{definition}
We restrict ourselves to connected amply regular graphs in this paper.
An amply regular graph with $\beta>1$ is called a $(2,\beta,\alpha,d)$-graph by Terwilliger \cite{Terwilliger83}.
Notice that any distance-regular graph is amply regular, and any strongly regular graph is an amply regular graph with diameter at most $2$.

In differential geometry, a general principle is that the information about the curvature at every point leads to diameter and eigenvalue bounds. Analogous results have been established for graphs. In this paper, we elaborate the point that the local regularity conditions of an amply regular graph play a role very similar to curvature. We derive sharp diameter and eigenvalue bounds for amply regular graphs by studying the so-called Lin--Lu--Yau curvature of each edge.

Ollivier \cite{Ollivier09} developed a notion of Ricci curvature of Markov chains valid on metric spaces including graphs. On graphs, the $p$-Ollivier--Ricci curvature $\kappa_p$ is defined on each edge via the Wasserstein distance between two probability measures depending on the idleness parameter $p\in [0,1]$ around the two vertices of the edge. (See Definition \ref{definition:$p$-Ollivier}). Various choices of the idleness $p$ have been studied \cite{Button3,BJL12,BM15,JL14}.
In \cite{Button4}, Lin, Lu, and Yau modified the definition of Ollivier--Ricci curvature to consider the minus of the derivative of $\kappa_p$ as a function of $p\in [0,1]$ at $p=1$. We will refer to it as the \emph{Lin--Lu--Yau curvature} and denote it by $\kappa$. The following estimates are analogues of the Bonnet--Myers theorem and Lichnerowicz estimate in Riemannian geometry.
\begin{theorem}[Discrete Bonnet--Myers theorem and Lichnerowicz estimate \cite{Ollivier09,Button4}]\label{theorem:discreteBM} Let $G=(V,E)$ be a locally finite connected graph. If $\kappa(x,y)\geq k>0$ holds true for any edge $xy\in E$, then the graph is finite and the diameter satisfies
\[\mathrm{diam}(G)\leq \frac{2}{k}.\]
The first nonzero eigenvalue $\lambda_1$ of the normalized Laplacian $L$ satisfies
\[\lambda_1\geq k.\]
Here the normalized Laplacian $L$ is defined as $L=I-D^{-\frac{1}{2}}AD^{-\frac{1}{2}}$, with $A$ being the adjacency matrix of the
graph $G$ and $D$ being the diagonal matrix of vertex degrees.
\end{theorem}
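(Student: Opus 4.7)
The plan is to use the standard Wasserstein-coupling method, exploiting that the Lin-Lu-Yau curvature $\kappa$ controls $\kappa_p$ infinitesimally near $p=1$. Write $\mu_x^p=p\delta_x+(1-p)\mu_x$, where $\mu_x$ is the uniform probability measure on the neighbours of $x$, and recall Kantorovich-Rubinstein duality, $\int f\,d\mu-\int f\,d\nu\leq \|f\|_{\mathrm{Lip}}W_1(\mu,\nu)$. The first step is to upgrade the pointwise curvature bound $\kappa\geq k$ on each edge to the uniform Wasserstein contraction $W_1(\mu_x^p,\mu_y^p)\leq(1-(1-p)k)\,d(x,y)$, valid on every edge $xy$ for $p$ in some terminal subinterval $[p_0,1)$. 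This follows from the Lin-Lu-Yau observation that $\kappa_p$ is linear in $p$ on a neighbourhood of $1$, or, more softly, from the fact that $p\mapsto W_1(\mu_x^p,\mu_y^p)$ is convex (it is the $W_1$-norm of a signed measure depending linearly on $p$), so that a limit argument with a parameter $\varepsilon\searrow 0$ recovers the same contraction in the end.

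Armed with this edgewise contraction, the diameter bound is obtained by sandwiching $W_1(\mu_x^p,\mu_y^p)$ for a diameter pair. Fix $x,y$ with $d(x,y)=D:=\mathrm{diam}(G)$ and a geodesic $x=x_0,x_1,\ldots,x_D=y$. Iterating the edgewise contraction via the triangle inequality for $W_1$ yields the upper bound $W_1(\mu_x^p,\mu_y^p)\leq D(1-(1-p)k)$. For the matching lower bound, note that $W_1(\delta_x,\mu_x^p)=1-p$ and similarly at $y$, so that triangle inequality gives $D\leq 2(1-p)+W_1(\mu_x^p,\mu_y^p)$. Combining the two estimates and cancelling the common factor $1-p>0$ leaves $D\leq 2/k$.

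For the Lichnerowicz estimate, let $f$ be an eigenfunction of $L$ with eigenvalue $\lambda_1$; since $G$ is $d$-regular, $Pf=(1-\lambda_1)f$ where $P=d^{-1}A$ is the simple random walk. The identity $\int f\,d\mu_x^p=pf(x)+(1-p)Pf(x)$ collapses the mean difference to
\begin{equation*}
\int f\,d\mu_x^p-\int f\,d\mu_y^p=\bigl(1-(1-p)\lambda_1\bigr)\bigl(f(x)-f(y)\bigr).
\end{equation*}
Kantorovich-Rubinstein duality combined with the edgewise contraction bounds this by $\|f\|_{\mathrm{Lip}}(1-(1-p)k)$ on any edge $xy$. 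Specializing to an edge $xy$ that attains $\|f\|_{\mathrm{Lip}}$ — which exists because the Lipschitz seminorm on a graph with the path metric is realised by some edge difference — and cancelling the positive common factor $(1-p)\|f\|_{\mathrm{Lip}}$ leaves $\lambda_1\geq k$.

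The single step that demands real care is the initial bridging between the derivative-at-$p=1$ definition of $\kappa$ and a Wasserstein contraction valid on a full subinterval of $p$'s; without this one can neither iterate cleanly along geodesics nor extract a usable inequality from $Pf=(1-\lambda_1)f$. Once that bridge is in place, both claims reduce to short and parallel triangle-inequality calculations on $W_1$, in direct analogy with the Riemannian proofs of Bonnet-Myers and Lichnerowicz via the heat semigroup.
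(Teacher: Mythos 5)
Your proposal is correct and follows essentially the standard argument which the paper does not reproduce (Theorem \ref{theorem:discreteBM} is quoted from Ollivier and Lin-Lu-Yau, with only the remark that the diameter bound rests on triangle inequalities for the Wasserstein distance and the eigenvalue bound on a coupling/duality argument) --- exactly the route you take, and the same pairwise bound $d(x,y)\leq 2/k$ for arbitrary pairs also yields the finiteness claim since $G$ is locally finite and connected. Two small repairs: the edgewise contraction $W(\mu_x^p,\mu_y^p)\leq 1-(1-p)k$ on a terminal interval of $p$ should be justified by the linearity of $p\mapsto\kappa_p$ near $p=1$ (or, per edge, by the definition of the limit with $\varepsilon\searrow 0$, using only finitely many edges at a time), not by convexity of $p\mapsto W(\mu_x^p,\mu_y^p)$ alone, which by itself gives the reverse inequality $\kappa_p\leq(1-p)\kappa$; and in the Lichnerowicz part you should not assume $d$-regularity --- instead use $I-D^{-1}A=D^{-\frac{1}{2}}LD^{\frac{1}{2}}$, so an eigenfunction $f$ of the random-walk Laplacian with the same eigenvalue $\lambda_1$ satisfies $\int f\,d\mu_x^p=pf(x)+(1-p)(D^{-1}Af)(x)$ and the rest of your computation goes through verbatim.
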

The above diameter bound is a simple consequence of the triangle inequality for the Wasserstein distance. The proof of the eigenvalue estimate involves a certain coupling method.

The curvature notions of Ollivier and Lin--Lu--Yau are naturally related to sizes of maximum matchings in appropriate subgraphs \cite{BM15,Smith14,Button8,Button7}. We develop a new method about local matching and curvature in this paper.

For any edge $xy\in E$ of an amply regular graph with parameters $(n,d,\alpha,\beta)$, it is easy to see \cite[Proposition 2.7]{Button6} that
\[\kappa(x,y)\leq \frac{2+\alpha}{d}.\]
If the amply regular graph has girth $4$, i.e., $\alpha=0, \beta\geq 2$, then it holds for any edge $xy\in E$ that (see \cite[Theorem 1.2]{Button1})
\begin{equation}\label{eq:girth4}
  \kappa(x,y)=\frac{2}{d}.
\end{equation}

For the case of girth 3, i.e. $\alpha \ge 1$, there is no exact formula for the Lin--Lu--Yau curvature purely in terms of the parameters $(n,d,\alpha,\beta)$: Bonini et al. \cite{Button7}  observed that the $4\times 4$ Rook's graph (i.e., the Cartesian product of two copies of ${K_{4}}$) and the Shrikhande graph are both strongly regular with parameters $(16,6,2,2)$, but have different curvature values $\kappa=\frac{2}{3}$ and $\kappa=\frac{1}{3}$ respectively. Li and Liu \cite{Button1} proved the following results for any edge $xy\in E$ of an amply regular graph $G=(V,E)$ with parameters $(n,d,\alpha,\beta)$ and girth 3:
\begin{itemize}
  \item [(i)] If $\alpha=1$ and $\alpha<\beta$, then
  $
  \kappa(x,y)=\frac{3}{d}.
  $
  \item [(ii)] If $\alpha\geq1$ and $\alpha=\beta-1$, then $
  \kappa(x,y)\ge \frac{2}{d}.
  $
  \item [(iii)] If $\alpha=\beta>1$, then $
  \kappa(x,y)\geq\frac{2}{d}.
  $
\end{itemize}

Our main result in this paper is the following Lin--Lu--Yau curvature estimate for amply regular graphs with girth 3.

\begin{theorem}\label{theorem:1.1}
Let $G=(V,E)$ be an amply regular graph with parameters $(n,d,\alpha,\beta)$ such that $\beta>\alpha\geq 1$.
Then the Lin--Lu--Yau curvature $\kappa(x,y)$ of any edge $xy\in E$ satisfies
\[
\kappa(x,y)\geq\frac{3}{d}.
\]
\end{theorem}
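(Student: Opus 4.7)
The plan is to estimate $\kappa(x,y)$ via an explicit coupling between $\mu_x^p$ and $\mu_y^p$ for $p$ close to $1$, using the identification $\kappa(x,y) = \lim_{p\to 1}(1-W_1(\mu_x^p,\mu_y^p))/(1-p)$. Concretely, for each small $\epsilon := 1-p$, I would construct a transport plan of cost at most $1 - \epsilon(2 + \alpha(\beta-\alpha)/(\beta-1))/d$, and then observe that the inequality $2 + \alpha(\beta-\alpha)/(\beta-1) \geq 3$ is equivalent to $(\alpha-1)(\beta-\alpha-1) \geq 0$, which holds since $\beta > \alpha \geq 1$. Passing to $p \to 1$ then yields $\kappa(x,y) \geq 3/d$.

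For the construction, decompose $N(x) = \{y\} \cup A \cup B$ and $N(y) = \{x\} \cup A \cup C$ with $A = N(x) \cap N(y)$, $|A|=\alpha$, and $|B|=|C|=d-1-\alpha$. Counting common neighbors in the amply regular graph yields two crucial identities: $|N(w) \cap (A \cup C)| = \alpha - 1$ for every $w \in A$ (from $w$ sharing $\alpha$ common neighbors with $y$, one being $x$), and $|N(u) \cap (A \cup C)| = \beta - 1$ for every $u \in B$ (from $u$ sharing $\beta$ common neighbors with $y$, again one being $x$), with symmetric identities on the other side. The novel construction is then a bipartite multigraph $\Gamma$ on $L = A \cup B$ and $R = A \cup C$: each $w \in A$ is joined to its copy in $R$ by $\beta - \alpha$ parallel ``self-loop'' edges, and each pair $u \in L, v \in R$ with $u \neq v$ and $u \sim v$ in $G$ is joined by a single edge. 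The identities above ensure $\Gamma$ is $(\beta-1)$-regular, so by K\"onig's edge-coloring theorem $\Gamma$ decomposes into $\beta-1$ perfect matchings; averaging these yields a fractional coupling on $L \times R$ with uniform marginal $\epsilon/d$ in which total mass $\alpha(\beta-\alpha)\epsilon/((\beta-1)d)$ sits at $G$-distance $0$ (self-loop pairs) and the remaining mass at $G$-distance $1$.

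Combining this coupling with the elementary self-matches of mass $\epsilon/d$ at $x$ and at $y$ (each at distance $0$) and the bulk transport of the residual mass $p - \epsilon/d$ from $x$ to $y$ along the edge $xy$, the total cost evaluates to exactly $1 - \epsilon(2 + \alpha(\beta-\alpha)/(\beta-1))/d$, as desired. The main obstacle is identifying the correct self-loop multiplicity $\beta - \alpha$ that balances $\Gamma$: without it, the $A$-vertices would have degree only $\alpha - 1$ while $B$- and $C$-vertices have degree $\beta - 1$, so K\"onig's theorem would not apply. The multiplicity $\beta - \alpha$ both restores regularity and contributes exactly the extra distance-$0$ mass needed to push the curvature past the threshold $3/d$; the hypothesis $\beta > \alpha$ is precisely what ensures this contribution exceeds $1$, i.e., that $\alpha(\beta-\alpha)/(\beta-1) \geq 1$. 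Verifying the two combinatorial identities and the resulting $(\beta-1)$-regularity of $\Gamma$ is the key technical content; the rest reduces to a direct cost calculation.
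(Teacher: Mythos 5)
Your proposal is correct, and while it is built on the same core idea as the paper --- manufacture a $(\beta-1)$-regular auxiliary bipartite graph from the local structure of the edge $xy$ and invoke K\"onig --- the execution is genuinely different and in fact cleaner. The paper keeps everything a simple graph: instead of your $\beta-\alpha$ parallel edges between each $z\in\Delta_{xy}$ and its copy, it adds $\beta-\alpha-1$ copies of $x$ on each side joined suitably to $\Delta_{xy}$, $\Delta_{xy}'$ and to each other, then extracts a \emph{single} perfect matching containing one diagonal edge $z_1z_1'$ and turns it into an integer transport plan at idleness $p=\frac{1}{d+1}$ via the limit-free formula of Bourne et al. Because a single matching can send a vertex of $N_x$ to a copy of $\Delta_{xy}$ or of $x$, the paper must follow ``reachable chains'' through the auxiliary vertices (its Lemmas \ref{lemma:3.2} and \ref{lemma:3.3}) and prove a distance estimate $d(v_0,w_0)\le\rho_M(v_0,w_0)-k$ to control the cost; your fractional plan, obtained by averaging the $\beta-1$ matchings (equivalently, taking the coupling proportional to edge multiplicities, which needs only the $(\beta-1)$-regularity and not the decomposition at all), moves every unit of mass a distance of at most $1$, with the diagonal mass $\frac{\alpha(\beta-\alpha)}{\beta-1}\cdot\frac{1-p}{d}$ sitting at distance $0$, so no chain analysis or choice of a distinguished matching is needed. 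Your degree counts ($\alpha-1$ for $A$-vertices, $\beta-1$ for $B$- and $C$-vertices), the marginal check, the cost computation $1-\frac{1-p}{d}\bigl(2+\frac{\alpha(\beta-\alpha)}{\beta-1}\bigr)$, and the final reduction to $(\alpha-1)(\beta-\alpha-1)\ge 0$ are all correct (K\"onig's edge-colouring theorem does hold for bipartite multigraphs, or one can simulate your parallel edges by the paper's device of added copies of $x$ if one insists on simple graphs). Note that your argument actually proves the quantitatively stronger bound $\kappa(x,y)\ge\frac{1}{d}\bigl(2+\frac{\alpha(\beta-\alpha)}{\beta-1}\bigr)$, which is strictly better than $\frac{3}{d}$ whenever $\alpha>1$ and $\beta>\alpha+1$, and is consistent with the general upper bound $\frac{2+\alpha}{d}$ since $\beta-\alpha\le\beta-1$; the paper's single-matching route only yields $\frac{3}{d}$.
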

This improves and generalizes (ii) of the above-mentioned results from \cite{Button1}.
The proof is built upon a novel construction of a regular bipartite graph from the local structure of an edge $xy\in E$ (see Definition \ref{definition:3.1}). We use perfect matchings of this newly constructed graph to estimate the curvature $\kappa(x,y)$. A key step is to estimate the so-called \emph{$M$-distance} (see Definition \ref{definition:3.2}) in terms of the original graph distance (Lemma \ref{lemma:3.3}).
\begin{corollary}\label{cor:diameter}
Let $G=(V,E)$ be an amply regular graph with parameters $(n,d,\alpha,\beta)$.
\begin{itemize}
  \item [(i)] If $1\neq \beta\geq \alpha$, then \[\mathrm{diam}(G)\leq d.\]
  \item [(ii)]If $\beta>\alpha\geq 1$, then  \[\mathrm{diam}(G)\leq \left\lfloor\frac{2}{3}d\right\rfloor.\]
\end{itemize}
\end{corollary}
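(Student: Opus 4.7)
The strategy is direct: combine the discrete Bonnet--Myers theorem (Theorem \ref{theorem:discreteBM}) with the edgewise Lin--Lu--Yau curvature lower bounds available under each hypothesis, and read off the diameter bound.

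For part (ii), Theorem \ref{theorem:1.1} supplies $\kappa(x,y) \geq 3/d$ for every edge $xy$ whenever $\beta > \alpha \geq 1$. The Bonnet--Myers estimate of Theorem \ref{theorem:discreteBM} then yields $\mathrm{diam}(G) \leq 2d/3$, and since the diameter is a nonnegative integer I would sharpen the right-hand side to $\lfloor 2d/3 \rfloor$.

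For part (i), the plan is to partition the hypothesis $1 \neq \beta \geq \alpha$ into three exhaustive subcases and to exhibit a lower bound $\kappa \geq 2/d$ in each. If $\alpha = 0$ and $\beta \geq 2$ the graph has girth $4$, so the identity \eqref{eq:girth4} gives $\kappa \equiv 2/d$. If $\alpha \geq 1$ and $\beta > \alpha$, Theorem \ref{theorem:1.1} produces $\kappa \geq 3/d \geq 2/d$. If $\alpha = \beta > 1$, item (iii) of the Li--Liu result recalled just before Theorem \ref{theorem:1.1} yields $\kappa \geq 2/d$. In each subcase, applying Theorem \ref{theorem:discreteBM} gives $\mathrm{diam}(G) \leq d$.

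The only bookkeeping point worth double-checking is the remaining edge case $\beta = 0$: two vertices at distance $2$ must share at least one common neighbor, so $\beta = 0$ forces the graph to have no pairs at distance $2$ at all, hence $\mathrm{diam}(G) \leq 1 \leq d$ trivially. Beyond verifying that these subcases exhaust the hypothesis, there is no substantive obstacle: the corollary is essentially a transcription of the curvature lower bounds into diameter bounds via the discrete Bonnet--Myers theorem.
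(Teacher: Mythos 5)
Your proof is correct and follows essentially the same route as the paper: both parts are obtained by feeding the curvature lower bounds (Theorem \ref{theorem:1.1}, the girth-$4$ identity \eqref{eq:girth4}, and the Li--Liu bound for $\alpha=\beta>1$) into the discrete Bonnet--Myers theorem, with the floor in (ii) coming from integrality of the diameter. Your explicit treatment of the trivial $\beta=0$ case is a minor extra precaution the paper leaves implicit, but it does not change the argument.
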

\begin{proof}
The bound (i) follows directly from Theorem \ref{theorem:discreteBM}, Theorem \ref{theorem:1.1}, (\ref{eq:girth4}) and the third result of \cite{Button1} mentioned above, while the bound (ii) follows from Theorem \ref{theorem:discreteBM} and Theorem \ref{theorem:1.1}.
\end{proof}
\begin{remark}
We compare our diameter bounds with known results.
\begin{itemize}
  \item [(i)] Corollary \ref{cor:diameter} (i) is already known in \cite[Theorem 1.13.2]{BCN89}. We provide here a new proof via discrete Ricci curvature.
  \item [(ii)] Neumaier and Penji\'{c} \cite[Theorem 1.1]{NP22} proved recently that for a distance-regular graph  with $d\geq 3$ and $1\neq \beta>\alpha$ we have
  \begin{equation}\label{eq:NP}
  \mathrm{diam}(G)\leq d-\beta+2\leq d.
   \end{equation}
   Recall that the $9$-Paley graph is strongly regular with parameters $(9,4,1,2)$. While (\ref{eq:NP}) produces the bound $\mathrm{diam}(G)\leq 4$, Corollary \ref{cor:diameter} (ii) produces the \emph{sharp} result $\mathrm{diam}(G)\leq 2$.
  \item [(iii)] In \cite[Corollary 1.9.2 (18) and (18a)]{BCN89}, the following two diameter bounds are proved for an amply regular graph $G$ with parameters $(n,d,\alpha,\beta)$: If $1\neq \beta\geq \alpha$ and $\mathrm{diam}(G)\geq 4$, then
      \begin{equation}\label{eq:BCN18}
      \mathrm{diam}(G)\leq d-2\beta+4\leq d.
      \end{equation}
      If $\beta\geq \max\{3,\alpha\}$ and $\mathrm{diam}(G)\geq 6$, then
      \begin{equation}\label{eq:BCN18a}
      d\geq \left(3-\frac{2}{\beta}\right)\left(\beta-3+\left\lfloor\frac{\mathrm{diam}(G)}{2}\right\rfloor\right).
      \end{equation}
      For any amply regular graph with parameters $(n,d,\alpha,\beta)$ such that $1\leq \alpha<\beta<\frac{d}{6}+2$, Corollary \ref{cor:diameter} (ii) produces better bounds than (\ref{eq:BCN18}). If $1\leq \alpha<\beta$ and $d>\frac{3}{2}(3\beta-2)(\beta-3)$, Corollary \ref{cor:diameter} (ii) produces better bounds than (\ref{eq:BCN18a}).
      Recall that the Hamming graph $H(p,q), p\geq 2, q\geq 2$ (i.e., the Cartesian product of $p$ copies of the complete graph $K_q$) is distance regular and hence amply regular with parameters $n=q^p, d=(q-1)p, \alpha=q-2, \beta=2$ \cite[Theorem 9.2.1]{BCN89}. For the Hamming graph $H(p,3)$, the bound (\ref{eq:BCN18}) yields $\mathrm{diam}(G)\leq 2p$ when $p\geq 4$, the bound (\ref{eq:BCN18a}) does not work since $\beta=2<3$, while our Corollary \ref{cor:diameter} (ii) produces $\mathrm{diam}(G)\leq \left\lfloor\frac{4p}{3}\right\rfloor$. Since the diameter of $H(p,3)$ equals $p$, we see our estimate is sharp when $p=2$. (The graph $H(2,3)$ is the $9$-Paley graph.)
\end{itemize}
\end{remark}
\begin{corollary}\label{cor:eigenvalue}
Let $G=(V,E)$ be an amply regular graph with parameters $(n,d,\alpha,\beta)$. Let $\sigma_1\leq \sigma_2\leq \cdots\leq \sigma_{n-1}\leq \sigma_n=d$ be the eigenvalues of its adjacency matrix.
\begin{itemize}
  \item [(i)] If $1\neq \beta\geq \alpha$, then \[\sigma_{n-1}\leq d-2.\]
  \item [(ii)]If $\beta>\alpha\geq 1$, then  \[\sigma_{n-1}\leq d-3.\]
\end{itemize}
\end{corollary}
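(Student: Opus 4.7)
The plan is to translate each curvature lower bound available to us into an upper bound on $\sigma_{n-1}$ via the Lichnerowicz-type estimate in Theorem \ref{theorem:discreteBM}.

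Since $G$ is $d$-regular, the normalized Laplacian simplifies to $L = I - \frac{1}{d}A$, so its spectrum is $\{1 - \sigma_i/d\}$. The zero eigenvalue $\lambda_0$ corresponds to $\sigma_n = d$, and the first nonzero eigenvalue of $L$ is $\lambda_1 = 1 - \sigma_{n-1}/d$. Consequently, any uniform edge-wise bound $\kappa(x,y) \geq k$ implies, through Theorem \ref{theorem:discreteBM}, the adjacency inequality $\sigma_{n-1} \leq d - kd$.

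For part (ii), the hypothesis $\beta > \alpha \geq 1$ is exactly that of Theorem \ref{theorem:1.1}, so $\kappa(x,y) \geq 3/d$ on every edge, and the implication above with $k = 3/d$ yields $\sigma_{n-1} \leq d - 3$. For part (i), I would split the hypothesis $1 \neq \beta \geq \alpha$ into three subcases, using $\alpha \leq \beta$ and $\beta \neq 1$: when $\alpha = 0$, the graph has girth $4$ and $\kappa = 2/d$ by (\ref{eq:girth4}); when $\alpha \geq 1$ and $\alpha = \beta$ (so $\beta > 1$), the third Li-Liu bound recalled in the introduction gives $\kappa \geq 2/d$; and when $\alpha \geq 1$ and $\alpha < \beta$, Theorem \ref{theorem:1.1} gives the stronger $\kappa \geq 3/d$. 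In every subcase $\kappa \geq 2/d$, so taking $k = 2/d$ delivers $\sigma_{n-1} \leq d - 2$.

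There is essentially no hard step here: once $d$-regularity identifies $L$ with $I - \frac{1}{d}A$, the argument reduces to bookkeeping. The only care required is to verify that the case analysis in part (i) exhausts the hypothesis $1 \neq \beta \geq \alpha$ so that the uniform curvature bound $\kappa \geq 2/d$ is available for every edge, which is guaranteed by the completeness of the case split above.
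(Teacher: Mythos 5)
Your proposal is correct and matches the paper's argument: the paper likewise combines Theorem \ref{theorem:1.1}, the girth-$4$ identity (\ref{eq:girth4}), and the third Li--Liu bound with the Lichnerowicz-type estimate $\lambda_1=1-\sigma_{n-1}/d\geq\inf_{xy\in E}\kappa(x,y)$ from Theorem \ref{theorem:discreteBM}. Your explicit case split for part (i) is exactly the bookkeeping the paper leaves implicit in its one-line proof.
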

\begin{proof}
This follows directly from Theorem \ref{theorem:1.1}, (\ref{eq:girth4}), the third result of \cite{Button1} mentioned above and the Lichnerowich type eigenvalue estimate $\lambda_1=1-\frac{\sigma_{n-1}}{d}\geq \inf_{xy\in E}\kappa(x,y)$ from Theorem \ref{theorem:discreteBM}.
\end{proof}
\begin{remark}\label{rmk:Koolen}
Recall that the second largest eigenvalue $\sigma_{n-1}$ of the Hamming graph $H(p,q)$ equals $(q-1)p-q=d-q$. Therefore, our estimates in Corollary \ref{cor:eigenvalue} are sharp. Indeed, the equality in Corollary \ref{cor:eigenvalue} (i) holds for $H(p,2)$ and the equality in Corollary \ref{cor:eigenvalue} (ii) holds for $H(p,3)$.
\end{remark}

To conclude, we remark another consequence of Theorem \ref{theorem:1.1} about the curvature of conference graphs.
\begin{remark}
It is conjectured by Bonini et al. \cite{Button7} that the Lin--Lu--Yau curvature of each edge in a strongly regular conference graph with parameters $(4\gamma+1, 2\gamma, \gamma-1,\gamma)$ with $\gamma\geq 2$ equals $\frac{1}{2}+\frac{1}{2\gamma}$. Theorem \ref{theorem:1.1} yields that
\[\kappa(x,y)\geq \frac{3}{2\gamma},\]
for any edge $xy$ in the conference graph. This improves the estimate $\kappa(x,y)\geq \frac{1}{\gamma}$ in \cite{Button1}.
\end{remark}

Observe that the conjecture of Bonini et al. actually claims the Lin--Lu--Yau curvature of a conference graph with parameters $(n,d,\alpha,\beta)$ achieves the upper bound $\frac{2+\alpha}{d}$. In Proposition \ref{proposition:3.1}, we give a sufficient condition for an amply regular graph with parameters $(n,d,\alpha,\beta)$ to have curvature $\frac{2+\alpha}{d}$, that is, $2\beta-\alpha\geq d+1$.

\section{Preliminaries}

We first recall the concept of Wasserstein distance between probability measures which is needed for definitions of the Ollivier--Ricci curvature and Lin--Lu--Yau curvature.

\begin{definition} [\textbf{Wasserstein distance}]\label{definition:Wasserstein}
Let $G=(V,E)$ be a locally finite graph, $\mu_{1}$ and $\mu_{2}$ be two probability measures on $G$. The Wasserstein distance   between $\mu_{1}$ and $\mu_{2}$ is defined as
\[
W(\mu_{1},\mu_{2})=\mathop{\inf}_{\pi}{\mathop\sum_{y\in V}\mathop\sum_{x\in V}d(x,y)\pi(x,y)},
\]
where $d(x,y)$ is the combinatorial distance between $x$ and $y$, i.e., the length of the shortest path connecting $x$ and $y$, and the infimum is taken over all maps $\pi:V\times V\longrightarrow[0,1]$ satisfying
\[
\mu_1(x)={\mathop\sum_{z\in V}\pi(x,z)}~~ \mu_2(y)={\mathop\sum_{z\in V}\pi(z,y)}, ~~\text{for all}~~ x,y\in V.
\]
We call such a map $\pi$ a transport plan.
\end{definition}

We consider the following particular probability measure $\mu_x^p$ with an idleness parameter $p\in[0,1]$, around a vertex $x\in V$ :
\[
\mu_x^p(v)=\left\{
                    \begin{array}{ll}
                      p, & \hbox{if $v=x$;} \\
                      \frac{1-p}{\mathrm{deg}(x)}, & \hbox{if $v\sim x$;} \\
                      0, & \hbox{otherwise,}
                    \end{array}
                  \right.
\]
where $\mathrm{deg}(x):={\mathop\sum\limits_{v\in V:v\sim x}{1}}$ denotes the vertex degree of $x$.

\begin{definition} [$p$-Ollivier--Ricci curvature \cite{Ollivier09,Button3} and Lin--Lu--Yau curvature \cite{Button4}]\label{definition:$p$-Ollivier}
Let $G=(V,E)$ be a locally finite graph. For any vertices $x,y\in V$, the $p$-Ollivier--Ricci curvature $\kappa_p(x,y)$, $p\in[0,1]$, is defined as
\[
\kappa_p(x,y)=1-\frac{W(\mu_x^p,\mu_y^p)}{d(x,y)}.
\]
The Lin--Lu--Yau curvature $\kappa(x,y)$ is defined as
\[
\kappa(x,y)=\mathop{\lim}_{p\rightarrow1}{\frac{\kappa_p(x,y)}{1-p}}.
\]
\end{definition}

Given two vertices $x$ and $y$, Lin--Lu--Yau curvature equals to the minus of the derivative of the function $p\mapsto \kappa_p(x,y)$ at $p=1$, since $\kappa_1(x,y)=0$.

For an edge $xy$ in a $d$-regular graph, Bourne et al. \cite{Button5} showed that the function $p\mapsto \kappa_p(x,y)$ is concave, piecewise linear over $[0,1]$, and, in particular, linear over $[\frac{1}{d+1},1]$. This leads to a limit-free reformulation of the Lin--Lu--Yau curvature along an edge:
\begin{equation}\label{eq:Bourne}
\kappa(x,y)=\frac{d+1}{d}\kappa_\frac{1}{d+1}(x,y).
\end{equation}

We recall the important concept of matching from graph theory.
\begin{definition}\cite[Section 16.1]{BM08}
Let $G=(V,E)$ be a locally finite simple connected graph. A set $M$ of pairwise nonadjacent edges is called a \emph{matching}. The two vertices of each edge of $M$ are said to be \emph{matched} under $M$, and each vertex adjacent to an edge of $M$ is said to be \emph{covered} by $M$. A matching $M$ is called a \emph{perfect matching} if it covers every vertex of the graph.
\end{definition}

The following Hall's marriage theorem \cite{Hall35} (see also \cite[Theorem 16.4]{BM08}) deals with the existence of a perfect matching in a bipartite graph.
\begin{theorem}[Hall's Marriage Theorem]\label{theorem:Hall's}
Let $H=(V,E)$ be a bipartite graph with the bipartition $V=X\sqcup Y$. Then $H$ has a perfect matching if and only if
\[
|X|=|Y|\,\, \text{and}\,\,  |\Gamma_H(S)|\geq|S|\,\, \text{for all} \, \,S\subseteq X\,(or Y)
\]
holds, where $\Gamma_H(S):=\{v\in V|\,\, \text{there exists}\,\, w\in S \,\, \text{such that}\,\, vw\in E\}$.
\end{theorem}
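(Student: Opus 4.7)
The plan is to prove Hall's Marriage Theorem by induction on $n=|X|$, after disposing of the (easy) necessity direction by observing that under any perfect matching $M$, every $S\subseteq X$ is mapped injectively into $\Gamma_H(S)$, so $|\Gamma_H(S)|\ge|S|$, and $|X|=|Y|$ because $M$ pairs them up.

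For sufficiency, assume $|X|=|Y|=n$ and Hall's condition holds. The base case $n=1$ is immediate: the single vertex $x\in X$ has $|\Gamma_H(\{x\})|\ge 1$, so pick any edge incident to it. For the inductive step, I would split into two cases depending on whether Hall's condition holds with slack or is tight somewhere. \textbf{Case A:} every proper nonempty $S\subsetneq X$ satisfies $|\Gamma_H(S)|\ge|S|+1$. Pick any $x\in X$ and any neighbor $y\in\Gamma_H(\{x\})$, match them, and delete both from $H$ to obtain $H'$. For any $S'\subseteq X\setminus\{x\}$ one has $\Gamma_{H'}(S')\supseteq\Gamma_H(S')\setminus\{y\}$, so $|\Gamma_{H'}(S')|\ge|\Gamma_H(S')|-1\ge|S'|$, and the inductive hypothesis produces a perfect matching of $H'$; adding $xy$ gives a perfect matching of $H$.

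\textbf{Case B:} there exists a proper nonempty $S\subsetneq X$ with $|\Gamma_H(S)|=|S|$. Consider the induced bipartite subgraph $H_1$ on $S\sqcup\Gamma_H(S)$. It satisfies Hall's condition (inherited from $H$), and the two sides have equal size $|S|<n$, so by induction $H_1$ has a perfect matching $M_1$. Now consider the complementary bipartite subgraph $H_2$ on $(X\setminus S)\sqcup(Y\setminus\Gamma_H(S))$; its sides also have equal size $n-|S|$. The key verification is Hall's condition for $H_2$: for any $T\subseteq X\setminus S$, applying Hall's condition in $H$ to $S\cup T$ gives $|\Gamma_H(S\cup T)|\ge|S|+|T|$, and since $\Gamma_{H_2}(T)=\Gamma_H(S\cup T)\setminus\Gamma_H(S)$ we obtain $|\Gamma_{H_2}(T)|\ge|S|+|T|-|S|=|T|$. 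Induction then supplies a perfect matching $M_2$ of $H_2$, and $M_1\cup M_2$ is a perfect matching of $H$.

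The main obstacle is the bookkeeping in Case B: one must carefully check that the complementary subgraph $H_2$ inherits Hall's condition, which is not obvious from the statement but follows from applying the condition in $H$ to the \emph{union} $S\cup T$ rather than to $T$ alone. The split between Cases A and B is forced precisely so that removing a single matched edge (Case A) does not destroy Hall's condition, while the presence of a "tight" set (Case B) is handled by the clean decomposition argument above.
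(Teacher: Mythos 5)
Your proof is correct: it is the standard induction on $|X|$ with the case split on whether Hall's condition is tight for some proper nonempty subset, and the one genuinely delicate point --- verifying Hall's condition for the complementary subgraph $H_2$ in Case~B via $|\Gamma_{H_2}(T)|=|\Gamma_H(S\cup T)|-|\Gamma_H(S)|\ge |S|+|T|-|S|=|T|$ --- is handled exactly right. Note that the paper itself gives no proof of this statement: Hall's Marriage Theorem is quoted there as a classical result with references to Hall (1935) and Bondy--Murty, so there is no in-paper argument to compare against; yours is the standard textbook proof of the cited theorem.
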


The following K\"onig's theorem \cite{Konig16} (see also \cite[Theorem 14 in Chap. XI]{Konig36}) is an important tool for our purpose. It is a direct consequence of Hall's marriage theorem.
\begin{theorem}[K\"onig's theorem]\label{lemma:Konig} A bipartite graph $G$ can be decomposed into $d$ edge-disjoint perfect matchings if and only if $G$ is $d$-regular.
\end{theorem}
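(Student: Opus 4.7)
The plan is to establish both directions of the equivalence separately, with the forward direction being essentially a degree count and the converse proceeding by induction on the regularity $d$, using Hall's marriage theorem (Theorem~\ref{theorem:Hall's}) at each step to extract one perfect matching at a time.

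First I would handle the easy direction: if the edge set admits a decomposition $E = M_1 \sqcup M_2 \sqcup \cdots \sqcup M_d$ into edge-disjoint perfect matchings, then every vertex is incident to exactly one edge from each $M_i$, so every vertex has degree exactly $d$, proving $G$ is $d$-regular.

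For the converse, I would argue by induction on $d$. The base case $d=0$ is trivial (the empty decomposition). For the inductive step, suppose $G=(V,E)$ is a $d$-regular bipartite graph with bipartition $V = X \sqcup Y$. The key step is to verify that Hall's condition holds for $G$. Counting edges from both sides gives $d|X| = |E| = d|Y|$, so $|X|=|Y|$. For any $S \subseteq X$, every one of the $d|S|$ edges incident to $S$ has its other endpoint in $\Gamma_G(S)$, while $\Gamma_G(S)$ is incident to at most $d|\Gamma_G(S)|$ edges in total. Hence $d|S| \leq d|\Gamma_G(S)|$, which gives $|S|\leq |\Gamma_G(S)|$. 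By Theorem~\ref{theorem:Hall's}, $G$ possesses a perfect matching $M$.

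Removing the edges of $M$ from $G$ decreases the degree of every vertex by exactly one, producing a $(d-1)$-regular bipartite graph $G' = (V, E \setminus M)$. By the inductive hypothesis, $G'$ decomposes into $d-1$ edge-disjoint perfect matchings $M_1,\dots,M_{d-1}$, and together with $M$ these furnish the desired decomposition of $G$ into $d$ edge-disjoint perfect matchings. There is no real obstacle in this argument; the only point requiring care is the symmetric edge count that simultaneously yields $|X|=|Y|$ and Hall's inequality, after which the induction runs mechanically.
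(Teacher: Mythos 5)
Your proof is correct and follows exactly the route the paper indicates: it states that K\"onig's theorem ``is a direct consequence of Hall's marriage theorem'' and gives no further argument, and your induction—extracting one perfect matching at a time after verifying Hall's condition via the double edge count—is precisely that standard deduction. No gaps; the argument is complete as written.
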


Another direct consequence of the Hall's marriage theorem, which is useful for our later purposes, is given below.
\begin{corollary}\label{corollary:exist optimal matching 2}
Let $H=(V,E)$ be a bipartite graph with the bipartition $V=X\sqcup Y$, and $|X|=|Y|=n$ $(n\geq1)$. If the minimal vertex degree $\delta(H)\geq\frac{n}{2}$, then $H$ has a perfect matching.
\end{corollary}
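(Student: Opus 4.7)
The plan is to verify Hall's marriage condition from Theorem \ref{theorem:Hall's} and then invoke Hall's theorem. Since $|X|=|Y|=n$ is given, it suffices to show that $|\Gamma_H(S)|\geq|S|$ for every $S\subseteq X$, splitting into two cases according to the size of $S$ relative to $n/2$.

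In the first case, when $|S|\leq n/2$, I would pick any single vertex $x\in S$ and use that its neighborhood in $H$ is contained in $\Gamma_H(S)$. Since $\deg_H(x)\geq\delta(H)\geq n/2\geq|S|$, this immediately yields $|\Gamma_H(S)|\geq\deg_H(x)\geq|S|$.

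In the second case, when $|S|>n/2$, I claim $\Gamma_H(S)=Y$, which trivially gives $|\Gamma_H(S)|=n\geq|S|$. Suppose for contradiction there exists $y\in Y\setminus\Gamma_H(S)$. Then $y$ has no neighbor in $S$, so every neighbor of $y$ lies in $X\setminus S$, giving
\[
\delta(H)\leq\deg_H(y)\leq|X\setminus S|=n-|S|<\frac{n}{2},
\]
contradicting the hypothesis $\delta(H)\geq n/2$.

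Having verified Hall's condition in both cases, Theorem \ref{theorem:Hall's} furnishes a perfect matching, completing the proof. There is no real obstacle here; the argument is a standard degree-based application of Hall's theorem, and the only minor point is to be careful that the case split handles $|S|=n/2$ correctly (it falls into the first case with equality throughout).
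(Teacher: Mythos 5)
Your proof is correct and matches the paper's intent exactly: the paper states this corollary without proof as a ``direct consequence'' of Hall's marriage theorem (Theorem \ref{theorem:Hall's}), and your two-case degree argument is precisely the standard way to verify Hall's condition here. The only microscopic omission is the trivial case $S=\emptyset$ (for which Hall's condition holds vacuously), which does not affect correctness.
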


\section{Proofs of the main results}
Let $G=(V,E)$ be a graph. For any vertex $v\in V$, we denote by $\Gamma(v):=\{w\in V | vw\in E\}$ the set of its neighbors. In what follows, we fix an edge $x y\in E$ and use the notations:
\[\Delta_{xy}:=\Gamma(x)\cap\Gamma(y),\,\,N_{x}:=\Gamma(x)\setminus(\{y\}\cup\Gamma(y))\,\,\text{and}\,\,N_{y}:=\Gamma(y)\setminus(\{x\}\cup\Gamma(x)).\]

Before presenting the proof of Theorem \ref{theorem:1.1}, we prepare several definitions and lemmas.
We first construct a bipartite graph from the local structure of an edge $xy\in E$ as follows.
\begin{definition} \label{definition:3.1}
Let $G=(V,E)$ be an amply regular graph with parameters $(n,d,\alpha,\beta)$ such that $\beta>\alpha\geq 1$.
For any edge $xy\in E$, we construct a bipartite graph $H=(V_H, E_H)$ as follows. Let us denote the $\alpha$ vertices in $\Delta_{xy}$ by $z_1,\cdots,z_\alpha$. The vertex set of $H$ is given by
\[V_H=(N_x\cup \Delta_{xy}\cup \{x_1,\cdots,x_{\beta-\alpha-1}\})\sqcup (N_y\cup \Delta_{xy}'\cup \{x_1',\cdots,x_{\beta-\alpha-1}'\}).\]
Here $\Delta'_{xy}:=\{z_1',\ldots, z'_\alpha\}$ is a new added set with $\alpha$ vertices, which is considered as a copy of $\Delta_{xy}$. The sets $\{x_1,\cdots,x_{\beta-\alpha-1}\}$ and $\{x_1',\cdots,x_{\beta-\alpha-1}'\}$ are new added set where $\beta-\alpha\geq 1$, which are considered as copies of the vertex $x$ and are empty if $\beta-\alpha=1$.

The edge set $E_H:=\bigcup_{i=1}^8 E_i$ is given by
\begin{align*}
E_1=&\{vw | v\in N_x, w\in N_y, vw\in E\},\\
E_2=&\{vz_i' | v\in N_x, z_i'\in \Delta_{xy}', vz_i\in E\},\\
E_3=&\{z_iw | z_i\in \Delta_{xy}, w\in N_y, z_iw\in E\},\\
E_4=&\{z_iz_i' | i=1,\cdots,\alpha\},\\
E_5=&\{z_iz_j' | z_iz_j\in E, i\neq j, 1\leq i,j\leq \alpha\},\\
E_6=&\{x_iz_j' | i=1,\cdots,\beta-\alpha-1;j=1,\cdots,\alpha\},\\
E_7=&\{x_i'z_j | i=1,\cdots,\beta-\alpha-1;j=1,\cdots,\alpha\},\\
E_8=&\{x_ix_j' | 1\leq i,j\leq \beta-\alpha-1\}.
\end{align*}
In the above $E_6, E_7$ and $E_8$ are empty when $\beta-\alpha=1$.
We call $H$ a \emph{transport-bipartite graph} of $xy$.
\end{definition}

\begin{figure}[H]
    \centering
    \includegraphics[width=7cm]{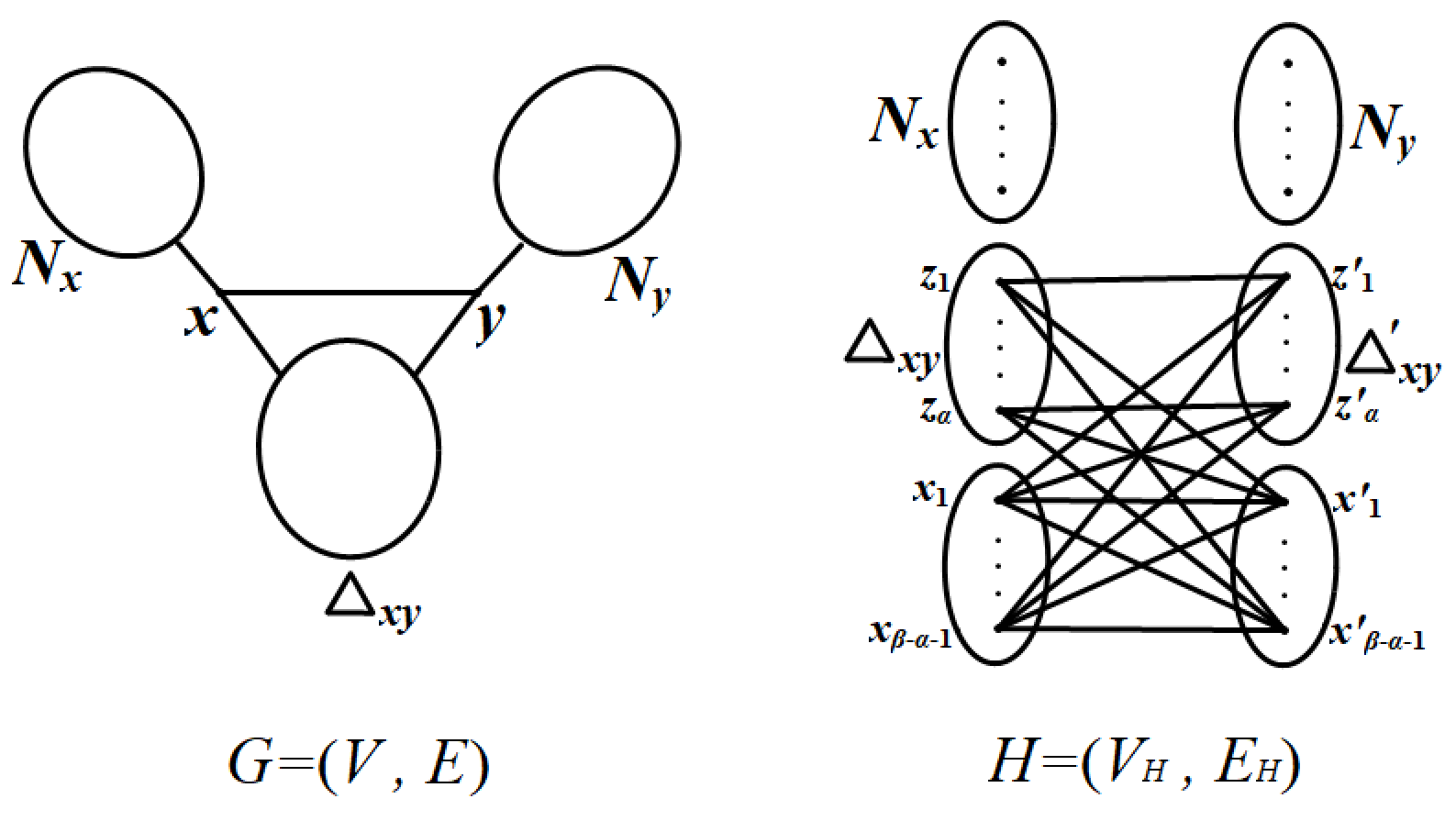}
    \caption{A schematic plot for Definition \ref{definition:3.1}}\label{fig:0}
\end{figure}

The definition of a transport-bipartite graph is depicted in Figure \ref{fig:0}. Note that only the edge sets $E_4, E_6, E_7$ and $E_8$ are displayed in the graph $H$. The other edge sets depend on the particular structure of graph $G$.

\begin{lemma}\label{lemma:3.1}
Let $G=(V,E)$ be an amply regular graph with parameters $(n,d,\alpha,\beta)$ where $\beta>\alpha\geq 1$. Then the transport-bipartite graph $H$ of any edge $xy\in E$ is $(\beta-1)$-regular.
\end{lemma}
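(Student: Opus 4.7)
The strategy is a direct degree count vertex by vertex. The bipartition is $V_H = A \sqcup B$ with
\[ A = N_x \cup \Delta_{xy} \cup \{x_1,\dots,x_{\beta-\alpha-1}\}, \quad B = N_y \cup \Delta'_{xy} \cup \{x_1',\dots,x'_{\beta-\alpha-1}\}, \]
and by symmetry of the construction under swapping $(x, A)$ with $(y, B)$, it suffices to check the degree of a representative from each of the four types on the $A$-side: $v \in N_x$, $z_i \in \Delta_{xy}$, and $x_i$ (the $B$-side is handled by the analogous argument). The types $z_i'$ and $x_i'$ will be treated symmetrically. I would first record the basic size $|N_x| = |N_y| = d - 1 - \alpha$, which follows from $\Gamma(x) = \{y\} \sqcup \Delta_{xy} \sqcup N_x$ and $d$-regularity.

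For $v \in N_x$, note that $v \sim x$ but $v \not\sim y$ and $v \neq y$, so $d(v,y) = 2$ and hence $v,y$ have exactly $\beta$ common neighbors, all lying in $\Gamma(y) = \{x\} \sqcup \Delta_{xy} \sqcup N_y$. Since $x$ is one such common neighbor, the remaining $\beta - 1$ lie in $\Delta_{xy} \cup N_y$. Writing $a_v := |\Gamma(v) \cap \Delta_{xy}|$, the edges at $v$ in $H$ come from $E_1$ (contributing $|\Gamma(v) \cap N_y| = \beta - 1 - a_v$) and from $E_2$ (contributing exactly $a_v$), giving total degree $\beta - 1$. For $z_i \in \Delta_{xy}$, the edge $z_iy$ has $\alpha$ common neighbors in $\Gamma(y)$, one of which is $x$, leaving $\alpha - 1$ in $(\Delta_{xy} \setminus \{z_i\}) \cup N_y$. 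Setting $b_i := |\Gamma(z_i) \cap (\Delta_{xy} \setminus \{z_i\})|$, this gives $|\Gamma(z_i) \cap N_y| = \alpha - 1 - b_i$. The edges at $z_i$ in $H$ come from $E_3$ ($\alpha - 1 - b_i$ edges), $E_4$ (one edge $z_i z_i'$), $E_5$ ($b_i$ edges to $z_j'$ with $z_i \sim z_j$, $j \neq i$), and $E_7$ ($\beta - \alpha - 1$ edges, one to each $x_k'$), summing to $\beta - 1$. For $x_i$, edges come only from $E_6$ (contributing $\alpha$) and $E_8$ (contributing $\beta - \alpha - 1$), again totaling $\beta - 1$.

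The symmetric cases $w \in N_y$, $z_i' \in \Delta'_{xy}$, and $x_i'$ are dealt with identically: in particular, since $\Delta_{xy}$ and $\Delta'_{xy}$ are in bijection via $z_i \leftrightarrow z_i'$ and the same adjacency parameter $b_i$ governs $z_i$'s neighbors inside $\Delta_{xy}$, the count at $z_i'$ uses the edge $xz_i$ (with its $\alpha$ common neighbors, one being $y$) in place of $yz_i$. There is no conceptual obstacle here; the only place to be careful is distinguishing when $\alpha$ versus $\beta$ is the relevant intersection number, and remembering to subtract the trivial common neighbor ($x$ or $y$) before redistributing the remaining neighbors between $\Delta_{xy}$ and the $N$-sets. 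Collecting the four cases and their mirror images yields that every vertex of $H$ has degree exactly $\beta - 1$, proving the lemma.
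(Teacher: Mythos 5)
Your proof is correct and follows essentially the same route as the paper's: a case-by-case degree count over the vertex types $N_x$, $\Delta_{xy}$, $\{x_i\}$ (and their mirror images), using $d(v,y)=2$ to get $\beta$ common neighbors for $v\in N_x$ and the edge $z_iy$ (resp.\ $z_ix$) to get $\alpha$ common neighbors for $\Delta_{xy}$-vertices, with the copy edges $E_4$ and the auxiliary vertices supplying the remaining $1+(\beta-\alpha-1)$ edges. Your bookkeeping with $a_v$ and $b_i$ just makes explicit the split between $E_1/E_2$ and $E_3/E_5$ that the paper treats in aggregate, so there is nothing further to add.
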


\begin{proof}
For any $xy\in E$, we check the degree of vertices from $V_H$ in $H=(V_H, E_H)$ as follows.
\begin{itemize}
  \item [(a)] For any vertex $v\in N_x$, we have $d(v,y)=2$ in $G$. Therefore, there are $\beta$ common neighbors of $v$ and $y$ in $V$, $\beta-1$ vertices of which lie in $N_y\cup \Delta_{xy}$. Therefore, in the graph $H$, there are $\beta-1$ neighbors of $v$ in $N_y\cup\Delta_{xy}'$ by definition.
 Similarly, for any vertex $v\in N_y$, there are $\beta-1$ neighbors of $v$ in $N_x\cup\Delta_{xy}$.
  \item [(b)] For any vertex $v\in\Delta_{xy}$, we have $vy\in E$. Therefore, there are $\alpha$ common neighbors of $v$ and $y$ in $V$, $\alpha-1$ vertices of which lie in $N_y\cup \Delta_{xy}$. Therefore, in the graph $H$, there are $\alpha$ neighbors of $v$ in $N_y\cup\Delta_{xy}'$ by definition (Recall the existence of $E_4$). Due to the edge set $E_6$, there are $\alpha+(\beta-\alpha-1)=\beta-1$ neighbors of $v$ in $N_y\cup\Delta_{xy}'\cup\{x_1',\cdots,x_{\beta-\alpha-1}'\}$. Similarly, for any vertex $v\in \Delta_{xy}'$, there are $\beta-1$ neighbors of $v$ in $N_x\cup\Delta_{xy}\cup\{x_1,\cdots,x_{\beta-\alpha-1}\}$.
  \item [(c)] When $\beta-\alpha>1$, for any vertex $v\in \{x_1,\cdots,x_{\beta-\alpha-1}\}$, there are $\alpha$ neighbors of $v$ in $\Delta_{xy}'$ and $\beta-\alpha-1$ neighbors in $\{x_1',\ldots,x'_{\beta-\alpha-1}\}$. Similarly, for any vertex $v\in \{x_1',\cdots,x_{\beta-\alpha-1}'\}$, there are $\alpha+(\beta-\alpha-1)=\beta-1$ neighbors of $v$ in $\Delta_{xy}\cup\{x_1,\cdots,x_{\beta-\alpha-1}\}$.
\end{itemize}
In conclusion, every vertex $v\in V_H$ has $\beta-1$ neighbors in $H$.
\end{proof}

\begin{definition} \label{definition:3.2}
Let $G=(V,E)$ be an amply regular graph with parameters $(n,d,\alpha,\beta)$ where $\beta>\alpha\geq 1$. For any edge $xy\in E$, let $H$ be the transport-bipartite graph of $xy$ and $M$ be a perfect matching of $H$. Note that the existence of such a perfect matching $M$ is guaranteed by Theorem \ref{lemma:Konig} and Lemma \ref{lemma:3.1}. We say $v_0\in N_x$ and $w_0\in N_y$ are \emph{reachable in $M$} if either $v_0w_0\in M$ or there exist $\{t_1,\cdots,t_j\}\subset\Delta_{xy}\cup\{x_1,\cdots,x_{\beta-\alpha-1}\}$ for some $1\leq j\leq\beta-1$ such that
\[\{t_0t_1',t_1t_2',\cdots,t_{j-1}t_j',t_jw_0\}\subset M\]
where we use the notation $t_0:=v_0$. We denote it as $v_0\stackrel{M}{\longleftrightarrow}w_0$.
We call \[C_{v_0\stackrel{M}{\longleftrightarrow}w_0}:=(v_0,t_1,\cdots,t_j,w_0)\] the \emph{$M$-chain} between $v_0$ and $w_0$ in $M$. The length of the M-chain between $v_0$ and $w_0$ is defined as the \emph{$M$-distance} between $v_0$ and $w_0$ in $M$, denoted by $\rho_M(v_0,w_0)$.
\end{definition}

Note that the $M$-chain between two reachable vertices is unique, since every vertex in a $M$-chain is only covered by a unique edge in the perfect matching $M$. Therefore, the $M$-distance $\rho_M(v_0,w_0)$ is well-defined.
We have $\rho_M(v_0,w_0)\geq d(v_0,w_0)$, where $d(\cdot,\cdot)$ is the combinatorial distance in the graph $G$.
Observe that \[\rho_M(v_0,w_0)=|C_{v_0\stackrel{M}{\longleftrightarrow}w_0}\cap(N_x\cup\Delta_{xy}\cup\{x_1,\cdots,x_{\beta-\alpha-1}\})|.\]

\begin{lemma}\label{lemma:3.2}
Let $G=(V,E)$ be an amply regular graph with parameters $(n,d,\alpha,\beta)$ where $\beta>\alpha\geq 1$. For any edge $xy\in E$, let $H$ be the transport-bipartite graph of $xy$ and $M$ be a perfect matching of $H$. Let $\phi: N_x\to N_y$ be a map such that $v\stackrel{M}{\longleftrightarrow}\phi(v)$. Then the map $\phi$ is well-defined and one-to-one. Moreover, the corresponding $M$-chains are pairwise disjoint.
\end{lemma}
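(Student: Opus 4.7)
The plan is to view the perfect matching $M$ as a bijection between the two sides of $H$, and to interpret the reachable-chain starting from $v_0\in N_x$ as the iteration of this bijection until we land in $N_y$. Concretely, starting from the left-side vertex $v_0$, look at its unique $M$-partner on the right side; if it lies in $N_y$, declare this to be $\phi(v_0)$; otherwise it is of the form $t_1'$ with $t_1\in \Delta_{xy}\cup\{x_1,\cdots,x_{\beta-\alpha-1}\}$, so move to $t_1$ on the left and repeat.

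For well-definedness I would first show that this iteration terminates in $N_y$. Because each left-side vertex has a unique $M$-partner, the sequence $v_0=t_0,t_1,t_2,\ldots$ is deterministic, so it suffices to rule out repetitions. Suppose $t_i=t_k$ with $0\leq i<k$. Then $t_i$ and $t_k$ are matched to the same right-side vertex, forcing $t_{i+1}'=t_{k+1}'$ and hence $t_{i+1}=t_{k+1}$; running the same argument in reverse (each right-side vertex has a unique $M$-partner) gives $t_{i-1}=t_{k-1}$, and by backward induction $t_0=t_{k-i}$. For $k-i\geq 1$ this is impossible, since $t_0=v_0\in N_x$ while $t_{k-i}\in \Delta_{xy}\cup\{x_1,\cdots,x_{\beta-\alpha-1}\}$, and these subsets are pairwise disjoint by the construction of $V_H$. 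Since the left side of $H$ is finite, the iteration must terminate at some partner in $N_y$, yielding a unique $\phi(v_0)$.

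For injectivity, suppose $\phi(v_0)=\phi(v_0')=w_0$ with chains $(v_0,t_1,\ldots,t_j,w_0)$ and $(v_0',s_1,\ldots,s_{j'},w_0)$. Since $w_0$ has a unique $M$-partner on the left, $t_j=s_{j'}$. Walking backward along both chains and repeatedly using uniqueness of the $M$-partner of each intermediate right-side vertex, one obtains $t_{j-i}=s_{j'-i}$ for $0\leq i\leq \min(j,j')$. If $j>j'$, then setting $i=j'$ would give $t_{j-j'}=v_0'\in N_x$, contradicting $t_{j-j'}\in \Delta_{xy}\cup\{x_1,\cdots,x_{\beta-\alpha-1}\}$ since $j-j'\geq 1$; the case $j'>j$ is symmetric. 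Hence $j=j'$ and comparing first entries gives $v_0=v_0'$.

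The main (admittedly mild) obstacle is bookkeeping: one has to be careful about which side of $H$ each vertex lies on and to invoke the disjointness of $N_x$ from the buffer set $\Delta_{xy}\cup\{x_1,\cdots,x_{\beta-\alpha-1}\}$ at the correct steps, both to force the iteration to exit the buffer into $N_y$ and to prevent two distinct chains from merging. Once this is set up, both well-definedness and injectivity reduce to iterating the bijection induced by $M$.
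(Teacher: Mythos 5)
Your proposal is correct and takes essentially the same approach as the paper: it iterates the bijection given by $M$ (together with the identification of $\Delta_{xy}\cup\{x_1,\cdots,x_{\beta-\alpha-1}\}$ with its primed copy) until the chain exits into $N_y$, and uses uniqueness of $M$-partners plus disjointness of $N_x$ from the buffer for termination and injectivity. Your explicit cycle-ruling-out and backward-walking arguments simply supply details that the paper states more tersely (``the $u_j$'s are distinct'' and the one-line injectivity claim).
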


\begin{proof}
First, we show that for every $v\in N_x$, there exists a $w\in N_y$ such that $v\stackrel{M}{\longleftrightarrow}w$. If there exists a $w\in N_y$ such that $vw\in M$, we have $v\stackrel{M}{\longleftrightarrow}w$ by definition. If, otherwise, $vw\notin M$ for any $w\in N_y$,
there must be a $t_1\in \Delta_{xy}$ such that $vt_1'\in M$.
\\
\textbf{Step 1.} Let $t_1u_1\in M$ be the unique edge in $M$ covering $t_1$. Notice that
\[u_1\in N_y\cup\Delta_{xy}'\cup\{x_1',\cdots,x_{\beta-\alpha-1}'\}.\]
If $u_1\in N_y$, then we have $v\stackrel{M}{\longleftrightarrow}u_1$ by definition. Otherwise, we have $u_1\in \Delta_{xy}'\cup\{x_1',\cdots,x_{\beta-\alpha-1}'\}$ and we set $t_2':=u_1$.
\\
\textbf{Step 2. }Let $t_2u_2\in M$ be the unique edge in $M$ that covers $t_2$. Then, we have
\[u_2\in N_y\cup\Delta_{xy}'\cup\{x_1',\cdots,x_{\beta-\alpha-1}'\}.\]
If $u_2\in N_y$, then we have $v\stackrel{M}{\longleftrightarrow}u_2$ by definition. Otherwise, we have $u_2\in \Delta_{xy}'\cup\{x_1',\cdots,x_{\beta-\alpha-1}'\}$ and we set $t_3':=u_2$.

We continue this process to obtain a sequence of vertices $t_2'=u_1, t_3'=u_2,\ldots$ from the set $N_y\cup\Delta_{xy}'\cup\{x_1',\cdots,x_{\beta-\alpha-1}'\}$ and stop when $u_j\in N_y$ for the first time. Since the set $\Delta_{xy}'\cup\{x_1',\cdots,x_{\beta-\alpha-1}'\}$ is finite, and $u_j$'s are distinct, this process must stop after a finite step of iterations. That is, there exists a finite $j_0$ such that $u_{j_0}\in N_y$ for the first time. Then we have $v\stackrel{M}{\longleftrightarrow}u_{j_0}$ by definition.

Next, we show the map $\phi$ is one-to-one. For any two distinct vertices $v_1$ and $v_2$ in $N_x$, if $\phi(v_1)=\phi(v_2)$, then there exits two edges in $M$ covering the same vertex, which is a contradiction. Therefore, the map $\phi$ is injective. Since $|N_x|=|N_y|$, the map $\phi$ must be a bijection. By a similar argument, we further derive that the corresponding $M$-chains are pairwise disjoint.
\end{proof}

\begin{lemma}\label{lemma:3.3}
Let $G=(V,E)$ be an amply regular graph with parameters $(n,d,\alpha,\beta)$ where $\beta>\alpha\geq 1$. For any edge $xy\in E$, let $H$ be the transport-bipartite graph of $xy$  and $M$ be a perfect matching of $H$. For any $v_0\in N_x, w_0\in N_y$ which are reachable in $M$, we have
\[
d(v_0,w_0)\leq\rho_M(v_0,w_0)-k,
\]
where  $k=|C_{v_0\stackrel{M}{\longleftrightarrow}w_0}\cap\{x_1,\cdots,x_{\beta-\alpha-1}\}|$.
\end{lemma}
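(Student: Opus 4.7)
The plan is to convert the reachable-chain $C_{v_0 \stackrel{M}{\longleftrightarrow} w_0} = (v_0, t_1, \ldots, t_j, w_0)$ into a walk in $G$, then use the edge $v_0x$ to shortcut past the $x_a$-vertices. Note that $\rho_M(v_0,w_0) = j+1$ by definition.

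First I would establish two structural facts. (a) Both $t_1$ and $t_j$ lie in $\Delta_{xy}$, not in $\{x_1, \ldots, x_{\beta-\alpha-1}\}$: the only edges of $H$ incident to a vertex of $N_x$ (respectively $N_y$) belong to $E_1 \cup E_2$ (respectively $E_1 \cup E_3$), and these reach only vertices in $N_y \cup \Delta_{xy}'$ (respectively $N_x \cup \Delta_{xy}$). (b) The unprimed vertices $t_1, \ldots, t_j$ are pairwise distinct, since $M$ is a matching and a repetition would force some $t_i$ to be the endpoint of two matching edges. Together with (a), the $k$ indices $i$ with $t_i \in \{x_1, \ldots, x_{\beta-\alpha-1}\}$ are distinct elements of $\{2, \ldots, j-1\}$, so their maximum $r$ satisfies $r \geq k + 1$.

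If $k = 0$, the chain itself yields a walk $v_0, t_1, \ldots, t_j, w_0$ in $G$ whose edges come from $E_2, E_5, E_3$ respectively (each a direct translation of a $G$-edge by the definitions of these classes); its length is $j + 1 = \rho_M(v_0, w_0)$, giving the bound. For $k \geq 1$, by the choice of $r$ we have $t_{r+1}, \ldots, t_j \in \Delta_{xy} \subset \Gamma(x)$, and since $v_0 \in N_x \subset \Gamma(x)$, the sequence
\[
v_0, \, x, \, t_{r+1}, \, t_{r+2}, \, \ldots, \, t_j, \, w_0
\]
is a walk in $G$: $v_0 x \in E$; $x t_{r+1} \in E$; each $t_i t_{i+1} \in E$ for $r+1 \leq i \leq j-1$ (via $E_5$, since both endpoints lie in $\Delta_{xy}$ and are distinct); and $t_j w_0 \in E$ (via $E_3$). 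Its length is $j - r + 2 \leq j + 1 - k = \rho_M(v_0, w_0) - k$, which is the desired bound on $d(v_0, w_0)$.

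The main obstacle I anticipate is the edge-class bookkeeping in step one: one must check carefully, using the defining edge families $E_1$–$E_8$, that $t_1$ and $t_j$ cannot be $x_a$-vertices and that each consecutive pair in the shortcut walk genuinely corresponds to a $G$-edge. Once that is settled, the positional inequality $r \geq k+1$ and the single shortcut $v_0 \to x$ deliver the estimate immediately.
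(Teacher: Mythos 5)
Your proof is correct, but it runs along a genuinely different track than the paper's. You convert the reachable-chain into an explicit walk in $G$: using that $t_1,t_j\in\Delta_{xy}$ and that the $t_i$ are pairwise distinct, the last index $r$ with $t_r\in\{x_1,\dots,x_{\beta-\alpha-1}\}$ satisfies $r\geq k+1$, and the shortcut walk $v_0,x,t_{r+1},\dots,t_j,w_0$ (edges supplied by $E_5$, $E_3$ and the star of $x$) has length $j-r+2\leq \rho_M(v_0,w_0)-k$. The paper instead never builds a walk: for $k\geq 1$ it proves the counting lower bound $\rho_M(v_0,w_0)\geq k+3$ (by showing the matching edges at $v_0$ and $w_0$ go to two \emph{distinct} vertices of $\Delta_{xy}$, so $|C\cap\Delta_{xy}|\geq 2$) and then simply invokes the trivial estimate $d(v_0,w_0)\leq 3$ coming from the path $v_0\,x\,y\,w_0$, while for $k=0$ it falls back on the observation $d(v_0,w_0)\leq\rho_M(v_0,w_0)$ stated after Definition \ref{definition:3.2}. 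The paper's route is shorter because the positional bookkeeping (distinctness of all $t_i$, the bound $r\geq k+1$) is replaced by $d(v_0,w_0)\leq 3$; your route is more constructive and self-contained, and in the case $k=0$ it actually supplies the proof of the unproved observation $\rho_M\geq d$ that the paper implicitly uses. Your shared ingredient with the paper is the structural fact that the chain must enter and leave through $\Delta_{xy}$ (your fact (a)), which you justify correctly from the edge classes $E_1$--$E_8$; the only place where your write-up is slightly terse is the distinctness claim (b), where one should note that two chain edges covering a repeated unprimed vertex either are distinct members of $M$ (immediate contradiction) or coincide, in which case the coincidence propagates along the chain until it collides with the terminal edge $t_jw_0$, which has an endpoint in $N_y$ and so cannot equal an edge with a primed endpoint.
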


\begin{proof}
Let $\delta:=\beta-\alpha$. If $\delta=1$, then $k=0$, the estimate holds true.
Next, we suppose $\delta>1$. Notice that $0\leq k\leq \delta-1$. If $k=0$, the estimate holds true. It remains to discuss the case $1\leq k\leq \delta-1$.
Recall that
\[\rho_M(v_0,w_0)=|C_{v_0\stackrel{M}{\longleftrightarrow}w_0}\cap(N_x\cup\Delta_{xy}\cup\{x_1,\cdots,x_{\delta-1}\})|.\]
Notice that we have $|C_{v_0\stackrel{M}{\longleftrightarrow}w_0}\cap N_x|=|\{v_0\}|=1$. Now we estimate $|C_{v_0\stackrel{M}{\longleftrightarrow}w_0}\cap\Delta_{xy}|$.

Let $v_0u_1\in M$ be the unique edge in $M$ which covers $v_0$. Here we have $u_1\in N_y\cup\Delta_{xy}'$. If $u_1\in N_y$, then we have $u_1=w_0$ by Lemma \ref{lemma:3.2}. Moreover, we have \[|C_{v_0\stackrel{M}{\longleftrightarrow}w_0}\cap\{x_1,\cdots,x_{\delta-1}\}|=|\{v_0,w_0\}\cap\{x_1,\cdots,x_{\delta-1}\}|=0,\]
which contradicts the assumption that $k\geq 1$. So we have $u_1\in \Delta_{xy}'$. Let us set $t_1':=u_1$ with $t_1\in \Delta_{xy}$.
Let $u^*w_0\in M$ be the unique edge in $M$ which covers $w_0$, where $u^*\in N_x\bigcup\Delta_{xy}$. Similarly, we have $u^*\in \Delta_{xy}$.

We claim that $u^*\neq t_1$.
If $u^*=t_1$, then we have, by Lemma \ref{lemma:3.2}, that $C_{v_0\stackrel{M}{\longleftrightarrow}w_0}=(v_0,t_1,w_0)$. Hence $|C_{v_0\stackrel{M}{\longleftrightarrow}w_0}\cap\{x_1,\cdots,x_{\delta-1}\}|=0$, which contradicts the assumption that $k\geq 1$.
In conclusion, we have
      \[
      |C_{v_0\stackrel{M}{\longleftrightarrow}w_0}\cap\Delta_{xy}|\geq2.
      \]
      Then we have
      \begin{equation}
        \begin{split}
           \rho_M(v_0,w_0)=&|C_{v_0\stackrel{M}{\longleftrightarrow}w_0}\cap(N_x\cup\Delta_{xy}\cup\{x_1,\cdots,x_{\delta-1}\})|\\
                          =&|C_{v_0\stackrel{M}{\longleftrightarrow}w_0}\cap N_x|+|C_{v_0\stackrel{M}{\longleftrightarrow}w_0}\cap \Delta_{xy}|
                          +|C_{v_0\stackrel{M}{\longleftrightarrow}w_0}\cap\{x_1,\cdots,x_{\delta-1}\}|\\
                          \geq & 1+2+k\\
                          =&k+3.
        \end{split}
        \nonumber
      \end{equation}
      Since $d(v_0,w_0)\leq3$, we derive that
      $
      d(v_0,w_0)\leq\rho_M(v_0,w_0)-k.
      $
\end{proof}

Now, we are prepared for the proof of Theorem \ref{theorem:1.1}.
\begin{proof}[Proof of Theorem \ref{theorem:1.1}]
For any edge $xy\in E$, let $H$ be the transport-bipartite graph of $xy$. By Theorem \ref{lemma:Konig}, there exists a perfect matching $M$ of $H$ which covers $z_1z_1'\in E_H$.
We consider the following particular transport plan $\pi_0:V\times V\rightarrow[0,1]$ from $\mu_x^{\frac{1}{d+1}}$ to $\mu_y^{\frac{1}{d+1}}$:
\[
\pi_0(v,w):=\left\{
                    \begin{array}{ll}
                      \frac{1}{d+1}, &\hbox{if $v=w\in \Delta_{xy}\bigcup\{x,y\}$;} \\
                      \frac{1}{d+1}, &\hbox{if $v\in N_x,w\in N_y,\,\text{and}\,v\stackrel{M}{\longleftrightarrow}w$;} \\
                      0, &\hbox{otherwise.}
                    \end{array}
                  \right.
\]
Notice that $\pi_0$ is well defined by Lemma \ref{lemma:3.2}.
We calculate
\begin{equation}
  \begin{split}
    W(\mu_x^{\frac{1}{d+1}},\mu_y^{\frac{1}{d+1}})&=\mathop{\inf}_{\pi}{\mathop\sum_{v\in V}\mathop\sum_{w\in V}d(v,w)\pi(v,w)}\\
                                                  &\leq\mathop\sum_{v\in V}\mathop\sum_{w\in V}d(v,w)\pi_0(v,w)\\
                                                  &=\frac{1}{d+1}\mathop\sum_{v\in N_x,w\in N_y,\,\text{with}\,v\stackrel{M}{\longleftrightarrow}w}d(v,w).
  \end{split}
  \nonumber
\end{equation}

Let us denote $N_x:=\{v_1,\ldots, v_{p}\}$ and $N_y:=\{w_1,\ldots,w_{p}\}$ with $p=d-\alpha-1$ such that $v_i$ and $w_i$ are reachable in $M$ for any $i=1,\ldots,p$.
Set $\delta:=\beta-\alpha$ and $k_i:=|C_{v_i\stackrel{M}{\longleftrightarrow}w_i}\cap\{x_1,\cdots,x_{\delta-1}\}|$.
By Lemma \ref{lemma:3.3}, we derive
\begin{align}\label{eq:W}
(d+1)W(\mu_x^{\frac{1}{d+1}},\mu_y^{\frac{1}{d+1}})\leq\sum_{i=1}^pd(v_i,w_i)\leq \sum_{i=1}^p[\rho_M(v_i,w_i)-k_i]=\sum_{i=1}^{p}\rho_M(v_i,w_i)-k,
\end{align}
where $k:=\sum_{i=1}^pk_i$.

We notice that $k=|(\bigcup_{i = 1}^{p}C_{v_i\stackrel{M}{\longleftrightarrow}w_i})\cap\{x_1,\cdots,x_{\delta-1}\}|$. Hence,
\[\left|\{x_1,\cdots,x_{\delta-1}\}\setminus\bigcup_{i = 1}^{p}C_{v_i\stackrel{M}{\longleftrightarrow}w_i}\right|=\delta-1-k.\]
Since $z_1z_1'\in M$, we have $z_1\not\in \bigcup_{i = 1}^{p}C_{v_i\stackrel{M}{\longleftrightarrow}w_i}$ by definition. Therefore,
we have
\[\left|\left(\{z_1\}\bigcup\{x_1,\cdots,x_{\delta-1}\}\right)\setminus\bigcup\limits_{i = 1}^{p}C_{v_i\stackrel{M}{\longleftrightarrow}w_i}\right|=\delta-k.\]
We estimate that
\begin{equation}
  \begin{split}
\sum_{i = 1}^{p}\rho_M(v_i,w_i)
       &=\left|\left(N_x\cup\Delta_{xy}\cup\{x_1,\cdots,x_{\delta-1}\}\right)\cap (\bigcup\limits_{i = 1}^{p}C_{v_i\stackrel{M}{\longleftrightarrow}w_i})\right|\\
       &=\left|N_x\cup\Delta_{xy}\cup\{x_1,\cdots,x_{\delta-1}\}\right|\\
       &\,\,\,\,\,\,\,\,-\left|(N_x\cup\Delta_{xy}\cup\{x_1,\cdots,x_{\delta-1}\})\setminus\bigcup\limits_{i = 1}^{p}C_{v_i\stackrel{M}{\longleftrightarrow}w_i}\right|\\
       &\leq|N_x\cup\Delta_{xy}\cup\{x_1,\cdots,x_{\delta-1}\}|\\
       &\,\,\,\,\,\,\,\,-\left|(\{z_1\}\cup\{x_1,\cdots,x_{\delta-1}\})\setminus\bigcup\limits_{i = 1}^{p}C_{v_i\stackrel{M}{\longleftrightarrow}w_i}\right|\\
       &=(d+\delta-2)-(\delta-k)\\
       &=d+k-2.
  \end{split}
  \nonumber
\end{equation}
Inserting into (\ref{eq:W}) yields
\begin{equation}
  \begin{split}
    W(\mu_x^{\frac{1}{d+1}},\mu_y^{\frac{1}{d+1}})
   \leq\frac{d-2}{d+1}.
  \end{split}
  \nonumber
\end{equation}
Then we obtain by (\ref{eq:Bourne}) that
\[
\kappa(x,y)=\frac{d+1}{d}(1-W(\mu_x^{\frac{1}{d+1}},\mu_y^{\frac{1}{d+1}}))\geq\frac{d+1}{d}\left(1-\frac{d-2}{d+1}\right)=\frac{3}{d}.
\]
This completes the proof.
\end{proof}

\begin{proposition}\label{proposition:3.1}
Let $G=(V,E)$ be an amply regular graph with parameters $(n,d,\alpha,\beta)$ such that $2\beta-\alpha\geq d+1$. Then we have for any $xy\in E$ that
\[
\kappa(x,y)=\frac{2+\alpha}{d}.
\]
\end{proposition}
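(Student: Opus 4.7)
The plan is to prove the matching lower bound $\kappa(x,y) \geq \frac{2+\alpha}{d}$; the reverse inequality $\kappa(x,y) \leq \frac{2+\alpha}{d}$ is already recorded in the introduction, so equality will follow. To obtain the lower bound I will use the limit-free formula \eqref{eq:Bourne}, so it suffices to exhibit a single transport plan $\pi_0$ between $\mu_x^{1/(d+1)}$ and $\mu_y^{1/(d+1)}$ whose total cost is at most $\frac{d-\alpha-1}{d+1}$.

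The key combinatorial input will be a perfect matching inside the bipartite subgraph $B$ whose parts are $N_x$ and $N_y$ and whose edges are exactly the edges of $G$ between $N_x$ and $N_y$. Note $|N_x| = |N_y| = d-\alpha-1$. For any $v \in N_x$ one has $d(v,y)=2$, so $|\Gamma(v)\cap\Gamma(y)|=\beta$; since $x \in \Gamma(v)\cap\Gamma(y)$ and $\Gamma(y)=\{x\}\sqcup\Delta_{xy}\sqcup N_y$ with $|\Gamma(v)\cap\Delta_{xy}|\leq\alpha$, this forces
\[
|\Gamma(v)\cap N_y| \;\geq\; \beta - 1 - \alpha.
\]
The symmetric estimate holds for vertices of $N_y$. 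The hypothesis $2\beta-\alpha\geq d+1$ is equivalent to $\beta-1-\alpha\geq \frac{d-\alpha-1}{2}$, which is exactly the condition that the minimum degree of $B$ is at least $|N_x|/2$. Hence Corollary \ref{corollary:exist optimal matching 2} supplies a perfect matching $M\subseteq E(B)$ between $N_x$ and $N_y$.

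With $M=\{v_i w_i : 1\leq i\leq d-\alpha-1\}$ in hand, I define
\[
\pi_0(v,w)=\begin{cases}\frac{1}{d+1}, & v=w\in\{x,y\}\cup\Delta_{xy},\\ \frac{1}{d+1}, & (v,w)=(v_i,w_i)\text{ for some }i,\\ 0, & \text{otherwise}.\end{cases}
\]
A direct check of marginals confirms $\pi_0$ is a valid coupling of $\mu_x^{1/(d+1)}$ and $\mu_y^{1/(d+1)}$: every vertex in $\{x,y\}\cup\Delta_{xy}$ carries mass $\frac{1}{d+1}$ under both measures and is matched to itself; each $v_i\in N_x$ carries mass $\frac{1}{d+1}$ under $\mu_x^{1/(d+1)}$ only, and is sent to $w_i\in N_y$ which carries mass $\frac{1}{d+1}$ under $\mu_y^{1/(d+1)}$ only. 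Since each matching pair $v_iw_i$ is an edge of $G$, its transport cost is exactly $1$, giving
\[
W\bigl(\mu_x^{1/(d+1)},\mu_y^{1/(d+1)}\bigr) \;\leq\; \frac{d-\alpha-1}{d+1}.
\]

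Plugging this into \eqref{eq:Bourne} yields
\[
\kappa(x,y) \;=\; \frac{d+1}{d}\Bigl(1 - W\bigl(\mu_x^{1/(d+1)},\mu_y^{1/(d+1)}\bigr)\Bigr) \;\geq\; \frac{d+1}{d}\cdot\frac{\alpha+2}{d+1} \;=\; \frac{2+\alpha}{d},
\]
and combining with the general upper bound completes the proof. The only nontrivial step is recognizing that the algebraic hypothesis $2\beta-\alpha\geq d+1$ is exactly the Dirac-type threshold making the bipartite subgraph between $N_x$ and $N_y$ admit a perfect matching; once that is seen, the construction of $\pi_0$ and the arithmetic are routine.
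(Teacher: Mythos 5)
Your proposal is correct and follows essentially the same route as the paper: the same bipartite graph $B=(N_x\sqcup N_y,E_1)$, the same minimum-degree bound $\delta(B)\geq\beta-\alpha-1\geq\frac{|N_x|}{2}$ via the hypothesis $2\beta-\alpha\geq d+1$, the same appeal to Corollary \ref{corollary:exist optimal matching 2}, and then the cost computation with formula \eqref{eq:Bourne}. The only difference is cosmetic: you write out the transport plan and the marginal check explicitly (and cite the known upper bound $\kappa(x,y)\leq\frac{2+\alpha}{d}$ for equality), whereas the paper compresses this into "we check directly."
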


\begin{proof}
For any $xy\in E$, we consider the bipartite graph $B=(N_x\sqcup N_y, E_1)$. Recall that $E_1=\{vw | v\in N_x, w\in N_y, vw\in E\}$. For any subset $S\subset V$ and vertex $v\in V$, we denote the set of neighbors of $v$ in $S$ by $\Gamma_S(\{v\}):=\{u\in S: uv\in E\}$.
For any $v\in N_x$, we have
\[|\Gamma_{N_y\cup \Delta_{xy}}(\{v\})|=\beta-1\,\,\text{ and }\,\,|\Gamma_{\Delta_{xy}}(\{v\})|\leq |\Delta_{xy}|=\alpha.\]
Then we get $|\Gamma_{N_y}(\{v\})|\geq\beta-\alpha-1$. Similarly, we have for any $v\in N_y$ that $|\Gamma_{N_x}(\{v\})|\geq\beta-\alpha-1$. That is, the minimal vertex degree $\delta(B)\geq\beta-\alpha-1$. By the assumption that $2\beta-\alpha\geq d+1$, we derive
\[
\delta(B)\geq\frac{d-\alpha-1}{2}=\frac{|N_x|}{2}.
\]
Applying Corollary \ref{corollary:exist optimal matching 2}, there exists a perfect matching of $B$. Then we check directly
\[
\kappa(x,y)=\frac{d+1}{d}(1-W(\mu_x^{\frac{1}{d+1}},\mu_y^{\frac{1}{d+1}}))=\frac{d+1}{d}\left(1-\frac{d-\alpha-1}{d+1}\right)=\frac{2+\alpha}{d}.
\]
\end{proof}
Notice that we allow $\alpha=0$ in Proposition \ref{proposition:3.1}.
In Figures \ref{fig:1} and \ref{fig:2}, we give two examples of amply regular graphs satisfying the parameter condition in Proposition \ref{proposition:3.1}.
\begin{figure}[H]
    \begin{minipage}{6cm}
    \centering
    \includegraphics[height=3cm,width=3cm]{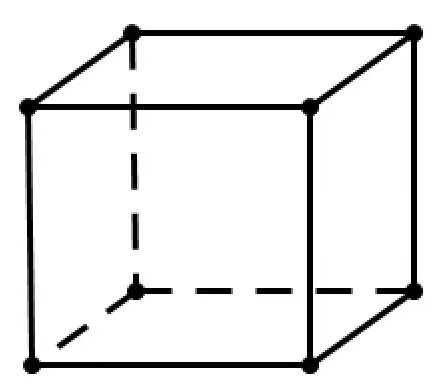}
    \caption{(8,3,0,2)}\label{fig:1}
    \end{minipage}
    \begin{minipage}{6cm}
    \centering
    \includegraphics[height=3cm,width=3cm]{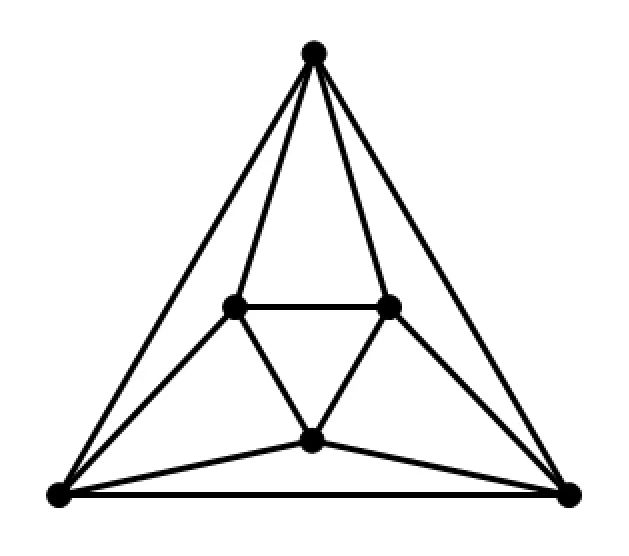}
    \caption{(6,4,2,4)}\label{fig:2}
    \end{minipage}
\end{figure}

\begin{remark}\label{remark:3.1}
Let $G=(V,E)$ be an amply regular graph with parameters $(n,d,\alpha,\beta)$ and girth 3 $(\alpha\geq1)$. By Theorem \ref{theorem:1.1}, \cite[Theorem 1.3]{Button1} and the upper bound in \cite[Proposition 2.7]{Button6}, we derive the following estimates: For any $xy\in E$, if $\beta=\alpha>1$, then
\[
\frac{2+\alpha}{d}\geq\kappa(x,y)\geq\frac{2}{d};
\]
If $\beta>\alpha\geq1$, then
\[
\frac{2+\alpha}{d}\geq\kappa(x,y)\geq\frac{3}{d}.
\]
We collect examples below showing that each of these inequalities is sharp.
\\For the case that $\beta=\alpha>1$, we have:
\begin{itemize}
  \item [(1)] The Shrikhande graph is strongly regular with parameters $(16,6,2,2)$, for which the Lin--Lu--Yau curvature of each edge is $\kappa=\frac{1}{3}=\frac{2}{d}$;
  \item [(2)] The $4\times 4$ Rook's graph is strongly regular with parameters $(16,6,2,2)$, for which the Lin--Lu--Yau curvature of each edge satisfies $\kappa=\frac{2}{3}=\frac{2+\alpha}{d}$.
\end{itemize}
For the case that $\beta>\alpha\geq1$, we have:
\begin{itemize}
  \item [(1)] For any amply regular graph with parameters $(n,d,1,\beta)$, the Lin--Lu--Yau curvature of each edge is $\kappa=\frac{3}{d}$ \cite[Theorem 1.3]{Button1};
  \item [(2)] For any amply regular graph with girth $3$ and parameters $(n,d,\alpha,\beta)$ such that $2\beta-\alpha\geq d+1$, the Lin--Lu--Yau curvature of each edge is $\kappa=\frac{2+\alpha}{d}$ according to Proposition \ref{proposition:3.1}. For example, the graph in Figure \ref{fig:2}, which is the Johnson graph $J(4,2)$, is such a case.
\end{itemize}
\end{remark}
\section*{Acknowledgements}
This work is supported by the National Key R and D Program of China 2020YFA0713100.
XH is supported by the National Natural Science Foundation of China (No. 11601238 and No. 12371206). SL is supported by
the National Natural Science Foundation of China (No. 12031017).
We warmly thank Jack H. Koolen for his kind comments and advices,  particularly for pointing out the sharpness of our eigenvalue estimates via the example of Hamming graphs in Remark \ref{rmk:Koolen}. We are grateful to the anonymous referees for their careful reading of our manuscript and their very helpful comments.

\end{document}